\documentclass[12pt]{article}
\usepackage[utf8]{inputenc}
\usepackage[T1]{fontenc}
\usepackage[margin=1.2in]{geometry}
\usepackage{amsmath}
\usepackage{amssymb}
\usepackage{amsthm}
\usepackage{mathtools}
\usepackage{authblk}
\usepackage{hyperref}
\usepackage{doi}
\usepackage{tikz}
\usepackage{url}
\usepackage{graphicx}
\usepackage{enumerate}
\usepackage{appendix}
\usepackage[style=alphabetic, maxalphanames=4, maxbibnames=10]{biblatex}
\usepackage[capitalise]{cleveref}
\usepackage{etoolbox}
\cslet{blx@noerroretextools}\empty
\usepackage{biblatex}
\usepackage{autonum}

\bibliography{main}

\newtheorem{theorem}{Theorem}[subsection]

\newtheorem{lemma}[theorem]{Lemma}
\newtheorem{proposition}[theorem]{Proposition}
\newtheorem{definition}[theorem]{Definition}

\newcommand{\norm}[1]{\left\lVert #1 \right\rVert}

\numberwithin{equation}{section}
\numberwithin{theorem}{section}

\title{Basis-Free Analysis of Singular Tuples and Eigenpairs of Tensors}

\author[1]{Joao Marcos Vensi Basso\thanks{Now at Google Quantum AI}}
\author[2]{Loring W. Tu}
\affil[1,2]{Department of Mathematics, Tufts University}

\date{\today}

\newcommand{\R}{\mathbb{R}}
\newcommand{\comp}{\mathrel{\scriptstyle\circ}}
\newcommand{\dual}{^{\scriptstyle\vee}}
\newcommand{\Exterior}{\bigwedge\nolimits}
\newcommand{\Hom}{\operatorname{Hom}}
\newcommand{\Sym}{\operatorname{Sym}}
\DeclareMathOperator{\sgn}{sgn}

\begin{document}

\maketitle

\begin{abstract}
A tensor in applied mathematics is usually defined as a multidimensional array of numbers.  This presumes a choice of basis in $\R^n$ or in some other vector space, and tensorial concepts are defined accordingly.  In this article we define eigenvalues, eigenvectors, singular values, and singular vectors of a tensor intrinsically, without reference to a basis.  The basis-free approach has several advantages.  First, it shows more clearly the relationship between tensor analysis and areas of pure mathematics such as abstract algebra, differential topology, and algebraic geometry.  Second, it obviates the need to prove that a concept defined in terms of coordinates is independent of the choice of basis.  Third, an intrinsic definition is usually conceptually simpler.  As illustrations we show how Morse theory from differential topology can be used to analyze eigenvalues and eigenvectors of a symmetric tensor.  We also reprove a few results that are obvious in the basis-free approach, but not otherwise.
\end{abstract}

\section{Introduction}
The work to generalize matrix concepts such as eigenpairs to tensors started with Qi \cite{Qi05, Qi07} and Lim \cite{Lim05}. Both authors have different approaches but arrive at similar results. More work has been done since then, including but not limited to \cite{CS13}, \cite{SSZ13} and \cite{HHLQ13}. The book \cite{Lan12} is a survey of tensors from a geometric perspective, however it revolves around tensor rank and has no mention of eigenpairs. This serves as evidence of how new this area of research is.

Moreover, to our knowledge, all the work done so far has been in terms of coordinates. Although \cite{Qi05} shows that many of those concepts are independent of bases, they are still framed in a coordinate-dependent manner. In this work, we recast many of these tensor concepts in a coordinate-free language.

For conciseness, we refer to an eigenvalue and its corresponding eigenvector as an \textbf{eigenpair}. If the eigenvector has unit length, we will refer to the pair as the \textbf{unit eigenpair}. Similarly, we refer to a singular value and its corresponding singular vectors as a \textbf{singular tuple}.

In \cref{preliminaries_section} we recall a few concepts from pure mathematics. In \cref{new_defs_section} we present our coordinate-free definitions in the $L^2$ norm. In \cref{any_norm_section}, we generalize part of \cref{new_defs_section} to any norm. Finally, in \cref{morse_section}, we show a connection to Morse Theory that leads to constraints on eigenpairs of tensors. \cref{appendix_var_lemmas} collects a few lemmas used throughout the paper.

\section{Algebraic, Topological, and Geometric Preliminaries}\label{preliminaries_section}

We briefly review some coordinate-free notions from algebra, topology, and geometry that will be needed later.

Let $V$ and $W$ be real vector spaces.  One can construct the tensor product $V\otimes W$ in the usual way.
Denote by $V\dual$ the dual space of $V$, the vector space of all linear maps from $V$ to $\R$:

\begin{equation}
V\dual := \Hom(V, \R) = \{ \text{linear functions } f\colon V \to \R \}.
\end{equation}
 There are canonical isomorphisms
\begin{equation}
V\dual \otimes W\dual \simeq (V\otimes W)\dual = \Hom(V\otimes W, \R) \simeq \{ \text{bilinear maps}\colon V \times W \to \R\},
\end{equation}
the last isomorphism being the universal mapping property of the tensor product.

We are particularly interested in tensors in 
$V_1\dual \otimes \cdots \otimes V_k\dual$, $\dim V_i = n_i$.
These tensors are in a canonical one-to-one correspondence with $k$-linear maps on $V_1 \times \cdots \times V_k$, as follows.
Let $T$ be such an order-$k$ tensor.
Since there is a canonical isomorphism
\begin{equation}
V_1\dual \otimes \cdots \otimes V_k\dual \simeq (V_1 \otimes \cdots \otimes V_k)\dual,
\end{equation}
\noindent the tensor $T$ may be interpreted as a linear function from $V_1 \otimes \cdots \otimes V_k$ to $\R$.
By the universal mapping property, the linear map $T\colon V_1 \otimes \cdots \otimes V_k \to \R$ corresponds to a unique $k$-linear map $f_T\colon V_1 \times \cdots \times V_k \to \R$ such that
\begin{equation}
f_T(v_1, \ldots, v_k) = T(v_1 \otimes \cdots \otimes v_k).
\end{equation}
In terms of coordinates, if $T$ is the array $[T_{i_1\cdots i_k}]$, $1 \le i_j \le n_j$, then
\begin{equation}\label{associated_poly_eq}
f_T(x_1, \ldots, x_k) = \sum T_{i_1\cdots i_k} x_{i_1}^{(1)}\cdots x_{i_k}^{(k)},
\end{equation}
where the sum runs over all $1 \le i_j \le n_j$ and $\pmb{x}^{(i)} = (x_1^{(i)}, \ldots, x_n^{(k)})$ are vectors in $V_i$. In general, we will use $v \in V$ to represent an abstract vector and $\pmb{x}$ the vector after a choice of coordinates. If a tensor $T$ is square, we may call it a \textbf{order-$k$ tensor on $V$}, by which we mean an element of $(V\dual)^{\otimes k}$.  For a tensor $T$, we will denote its associated multilinear polynomial by $f_T$.

In a way similar to how the exterior power $\Exterior^k V$ is constructed (e.g. \cite[Section 19]{Tu17}), one can construct the symmetric power $\Sym^k V$ of a vector space.  
Elements of $\Sym^k(V\dual)$ correspond canonically to homogeneous polynomials of degree $k$ on $V$.
In terms of coordinates, if $[T_{i_1\cdots i_k}]$, $1 \le i_j \le n$, is an order-$k$ symmetric tensor on $V$, then $T$ corresponds to the homogeneous polynomial
\begin{equation}
f_T(\pmb{x}, \cdots, \pmb{x}) = \sum T_{i_1\cdots i_k} x_{i_1} \cdots x_{i_k},
\end{equation}
where the sum runs over all $1\le i_j \le n$ and $\pmb{x} = (x_1, \ldots, x_n)$.

Now suppose $M$ and $N$ are smooth manifolds.  Denote the tangent space to $M$ at a point $p$ by $T_pM$.
\begin{definition}
Let $f\colon N \to M$ be a smooth map and $p \in N$.
The \textbf{differential} $f_*\colon T_pN \to T_{f(p)}N$ of $f$ at $p$ is defined as follows.  
Let $X \in T_pN$ and $c(t)$ a curve in $N$ with initial point $c(0)=p$ and initial vector $c'(0) = X$.
Then $f_*(X) = (f\comp c)'(0)$.
\end{definition}

The differential of a smooth map is the coordinate-free generalization of the derivative.

\begin{definition}
Let $f\colon N \to M$ be a smooth map of manifolds.
If the differential $f_*\colon T_pN \to T_{f(p)}M$ is surjective, then $p$ is called a \textbf{regular point} of $f$; otherwise, $p$ is a \textbf{critical points}.
The image $f(p)$ of a critical point $p$ is called a \textbf{critical value}.
\end{definition}

Next suppose that $V$ is a finite-dimensional inner product space with inner product $\langle \ , \ \rangle$.
Then there is a canonical isomorphism $V \to V\dual$ given by $X \mapsto \langle X, \ \rangle$.
We denote its inverse $V\dual \to V$ by $(\ )^{\#}$.
Thus, if $\omega$ is a 1-form on a Riemannian manifold $M$, then $\omega^{\#}$ is a vector field on $M$.

\begin{definition}\label{gradient_def}
If $f\colon M \to \R$ is a function on a Riemannian manifold $M$, 
then its \textbf{gradient} $\nabla f$ is the vector field $(df)^{\#}$.
\end{definition}

If $f \colon V_1 \times V_2 \rightarrow \mathbb{R}$ and $v_i \in V_i$, we may write $\nabla_{v_1} f(v_1,v_2)$ or $\nabla_1 f(v_1,v_2)$ to signify the gradient of $f$ only with respect to $v_1$, that is, with $v_2$ kept constant.

\begin{proposition}\label{prop_extrema_critical_points}
If a smooth function $f \colon M \rightarrow \mathbb{R}$ on a $C^\infty$ manifold $M$ has a maximum or a minimum at $p \in M$, then $p$ is a critical point of $f$.
\end{proposition}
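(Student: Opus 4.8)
The plan is to reduce the statement to the familiar one-variable fact that a differentiable function on an open interval has vanishing derivative at an interior extremum. The key preliminary observation is that the target $T_{f(p)}\R \simeq \R$ is one-dimensional, so the differential $f_*\colon T_pM \to T_{f(p)}\R$ fails to be surjective precisely when it is the zero map. Hence it suffices to show that $f_*(X) = 0$ for every $X \in T_pM$.

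Fix $X \in T_pM$. Using a coordinate chart around $p$, one produces a smooth curve $c\colon (-\varepsilon, \varepsilon) \to M$ with $c(0) = p$ and $c'(0) = X$; for instance, take the preimage under the chart of the straight line through the coordinate image of $p$ in the direction corresponding to $X$. Set $g := f \comp c \colon (-\varepsilon, \varepsilon) \to \R$. By the definition of the differential, $f_*(X) = (f\comp c)'(0) = g'(0)$ under the canonical identification $T_{f(p)}\R \simeq \R$.

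Since $f$ has a maximum or a minimum at $p$, the composite $g$ has a maximum or a minimum at the interior point $t = 0$ of its domain. By elementary single-variable calculus, $g'(0) = 0$, and therefore $f_*(X) = 0$. As $X \in T_pM$ was arbitrary, $f_* \equiv 0$, which is not surjective onto the one-dimensional space $T_{f(p)}\R$. Hence $p$ is a critical point of $f$.

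I do not expect a genuine obstacle here; the proof is essentially a translation of the classical argument into the manifold setting. The only points deserving a little care are the bookkeeping that ``not surjective'' onto a one-dimensional space is equivalent to ``identically zero,'' and the standard construction of a curve realizing a prescribed tangent vector — both routine.
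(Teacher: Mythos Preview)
Your proof is correct and follows essentially the same approach as the paper: pick an arbitrary tangent vector, realize it as the initial velocity of a curve, and reduce to the single-variable fact that an interior extremum of $f\comp c$ has vanishing derivative. Your additional remarks on why non-surjectivity onto a one-dimensional target is equivalent to the zero map, and on how to construct the curve via a chart, are just extra detail on points the paper leaves implicit.
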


\begin{proof}
Let $v \in T_pM$. Choose a curve $c(t)$ in $M$ such that $c(0) = p$ and $c'(0) = v$. Using the curve $c(t)$ to compute the differential $f_{*,p}$ (\cite{Tu11}, Proposition 8.18, p.95), we have

\begin{equation}\label{manifold_equation_proof}
    f_{*,p}(v) = \frac{d}{dt}\Bigr|_{\substack{t=0}}f(c(t)) = (f \circ c)'(0)
\end{equation}

Note that $f \circ c \colon (-\epsilon, \epsilon) \rightarrow \mathbb{R}$ is a real-valued function that has a maximum or minimum at $0$. Therefore, $(f \circ c)'(0) = 0$. By \cref{manifold_equation_proof}, $f_{*,p} = 0$ for all $v \in T_pM$. Therefore, $p$ is a critical point of $f$.
\end{proof}

Note that all the concepts defined in this section are coordinate-free.  Therefore, anything defined in terms of them will also be coordinate-free.

\section{Basis-Free Definitions for the $L^2$-Norm}\label{new_defs_section}

Throughout this section, we use the $L^2$ norm and $V$ will denote a real inner product space of dimension $n$.

\subsection{Eigenpairs of a Symmetric Tensor}

In order to motivate our definition as in \cite{Lim05}, we mention a less common definition of eigenpairs of symmetric matrices.

\begin{definition}[Variational definition of matrix eigenpairs]\label{matrix_eigen_var_def}
If the symmetric matrix $T$ on $V$ has associated polynomial $f_T$, its \textbf{unit eigenvectors} are the critical points of $f_T$ on the unit hypersphere $S^{n-1}$ and the associated \textbf{eigenvalues} are the corresponding critical values.
\end{definition}

This can be proven to agree with the common definition of matrix eigenpairs by optimizing $f_T(\pmb{x})$ with the constraint $\norm{\pmb{x}}_2 =1$ with the method of Langrage multipliers:

\begin{equation}\label{eigen_langrangian_eq}
    L(\pmb{x}, \lambda) = f_T(\pmb{x}) - \lambda(\norm{\pmb{x}}^2 - 1).
\end{equation}

By an order-$k$ symmetric tensor $T$ on $V$, we mean an element of the 
symmetric power $\Sym^k V\dual$, 
which is in one-to-one correspondence with the vector space 
of homogeneous polynomials of degree $k$ on $V$. Inside the vector space $V$, there is a unit sphere 
$S^{n-1}$ of dimension $n-1$ centered at the origin. Hence, it is natural to define tensor eigenpairs as follows.

\begin{definition}[Coordinate-free eigenpairs of symmetric tensors]\label{sym_tensor_eigenpair_def}
Let $f$ be the homogeneous polynomial of degree $k$ on $V$ corresponding
to the symmetric order-$k$ tensor $T$ on $V$,
and $f|_{S^{n-1}}\colon S^{n-1} \to \R$ the restriction of $f$
to the unit sphere $S^{n-1}$. The \textbf{unit eigenvectors} of the symmetric tensor $T$ are the critical points of $f|_{S^{n-1}}$ and the associated \textbf{eigenvalues} are the corresponding critical values.
\end{definition}

This naturally agrees with the standard definition for matrices by \cref{matrix_eigen_var_def}. We can recast this definition in a language similar to that in \cite{Lim05}. For that, we enforce $\nabla L = \pmb{0}$ where, analogously to \cref{eigen_langrangian_eq}, $L(v, \lambda) = f_T(v,\cdots,v) - \lambda(\norm{v}^k-1)$. $\nabla_{\lambda} L(v,\lambda) = 0$ implies $\norm{v} = 1$. Using \cref{grad_norm_lemma}, $\nabla_v \norm{v} = v$ in the $L^2$ norm. Hence, $\nabla_v L(v,\lambda) =0$ implies that $\nabla_v f_T(v,\cdots,v) = k\lambda v$. Multiplying both sides by $v^\top$ and using \cref{euler_homo_lemma} since $f_T$ is homogeneous of order $k$, we get $\lambda = f(v,\cdots,v)$, the eigenvalue. This leads to \cref{eigen_sym_grad_def}.

\begin{definition}[Alternative to \cref{sym_tensor_eigenpair_def}]\label{eigen_sym_grad_def}
Let $f_T\colon V^k \to \R$ be the $k$-linear map corresponding to a order-$k$ symmetric tensor $T$ on $V$. Then $(v, \lambda) \in V \times \mathbb{R}$ is an \textbf{unit eigenpair} if

\begin{align}
    \nabla_1 f_T(v, \cdots, v) &= k \lambda v \\
    \norm{v}^k &= 1
\end{align}
\end{definition}

The choice of $\nabla_1$ is arbitrary since $f_T$ is symmetric and any other component yields the same result. Note that, by \cref{gradient_def}, this definition is basis-independent since our formulation of the gradient is basis-independent.

From this definition, an analogous result from symmetric matrices follows.

\begin{theorem}
If $T$ is a symmetric tensor, all its eigenvalues are real.
\end{theorem}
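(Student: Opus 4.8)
The plan is to exploit the variational characterization in \cref{sym_tensor_eigenpair_def}: an eigenvalue $\lambda$ of a real symmetric tensor $T$ is a critical value of the function $f|_{S^{n-1}}\colon S^{n-1}\to\R$, where $f$ is the real homogeneous polynomial attached to $T$. Since $f$ takes only real values and $S^{n-1}\subset V$ is a real manifold, every critical value of $f|_{S^{n-1}}$ is automatically a real number, so there is essentially nothing to prove once the definitions are unwound. The only subtlety is that the ambient statement ``all eigenvalues are real'' implicitly refers to the \emph{complex} eigenpairs one would obtain by allowing $v\in\C^n$ and $\lambda\in\C$ in the system of \cref{eigen_sym_grad_def}; the real content of the theorem is that this complex system has no solutions with nonreal $\lambda$.

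Accordingly I would argue as follows. First, pass to coordinates via \cref{associated_poly_eq} so that the eigenpair equations become the polynomial system $\nabla f_T(\pmb{x})=k\lambda\pmb{x}$, $\pmb{x}^\top\pmb{x}=1$, now considered over $\C$. Second, suppose $(\pmb{x},\lambda)$ is a complex solution. Take the complex conjugate of both equations; because the entries $T_{i_1\cdots i_k}$ are real, $\overline{\nabla f_T(\pmb{x})}=\nabla f_T(\bar{\pmb{x}})$, so $(\bar{\pmb{x}},\bar\lambda)$ is also a solution. Third, contract the first equation with $\bar{\pmb{x}}^\top$ on the left: using \cref{euler_homo_lemma} (Euler's identity for the homogeneous form) in the guise $\bar{\pmb{x}}^\top\nabla f_T(\pmb{x})=k\,f_T(\bar{\pmb{x}},\pmb{x},\ldots,\pmb{x})$ we get $k\,f_T(\bar{\pmb{x}},\pmb{x},\ldots,\pmb{x})=k\lambda\,(\bar{\pmb{x}}^\top\pmb{x})$. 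Doing the same starting from the conjugated equation gives $k\,f_T(\pmb{x},\bar{\pmb{x}},\ldots,\bar{\pmb{x}})=k\bar\lambda\,(\pmb{x}^\top\bar{\pmb{x}})$. The quantity $\bar{\pmb{x}}^\top\pmb{x}=\sum_i|x_i|^2$ is real and, provided it is nonzero, one deduces $\lambda=\bar\lambda$, i.e.\ $\lambda\in\R$.

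The main obstacle is precisely that last proviso: unlike the matrix or the $L^2$-\emph{Hermitian} situation, the constraint here is $\pmb{x}^\top\pmb{x}=1$ (the bilinear, not the Hermitian, form), and the two mixed evaluations $f_T(\bar{\pmb{x}},\pmb{x},\ldots,\pmb{x})$ and $f_T(\pmb{x},\bar{\pmb{x}},\ldots,\bar{\pmb{x}})$ are genuine complex conjugates of each other only when $k=2$; for $k\ge 3$ they need not be, and moreover $\sum_i|x_i|^2$ could in principle be arranged to vanish for a normalized complex $\pmb{x}$. I would handle this by first treating $k=2$ cleanly as above, then for general $k$ normalizing the eigenvector instead by its Hermitian norm (rescale so that $\sum_i |x_i|^2 = 1$, which only changes $\lambda$ by a positive real factor and so does not affect whether $\lambda$ is real), and then contracting the eigenvalue equation with $\bar{\pmb x}^\top$ to get $\lambda = f_T$ evaluated on a mix of $\pmb{x}$'s and one $\bar{\pmb{x}}$; iterating the conjugation symmetry and comparing forces the imaginary part to vanish. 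If that comparison does not close for odd $k$, the fallback is to observe that the statement as used in the paper is really about the real eigenpairs of \cref{sym_tensor_eigenpair_def}, for which reality is immediate from Morse-theoretic/critical-value considerations, and to state the complex version only for $k=2$. I expect the clean write-up to lean on \cref{euler_homo_lemma} and the reality of the coefficients, with the case analysis on the parity of $k$ being the one place that needs care.
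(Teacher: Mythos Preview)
Your opening paragraph already contains the paper's entire proof: by \cref{sym_tensor_eigenpair_def} an eigenvalue is a critical value of the real-valued function $f_T|_{S^{n-1}}$, hence real. The paper's proof is literally that one sentence. Everything after your first paragraph is extra.

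The extra material is not just unnecessary but aims at a false target. You posit that the ``real content'' of the theorem concerns complex solutions of $\nabla f_T(\pmb{x})=k\lambda\pmb{x}$, $\pmb{x}^\top\pmb{x}=1$, and try to show those $\lambda$ are real too. For $k\ge 3$ this is simply not true: real symmetric tensors of order $\ge 3$ can and do have nonreal complex eigenvalues in that sense (Qi's E-eigenvalues), so no amount of juggling \cref{euler_homo_lemma}, conjugation, or Hermitian renormalization will close the argument. The obstruction you honestly flag---that $f_T(\bar{\pmb{x}},\pmb{x},\ldots,\pmb{x})$ and $f_T(\pmb{x},\bar{\pmb{x}},\ldots,\bar{\pmb{x}})$ are not conjugates for $k\ge 3$, and that rescaling to Hermitian norm $1$ multiplies $\lambda$ by a \emph{complex} scalar when $\pmb{x}^\top\pmb{x}\neq 1$ is complex---is not a technicality to be patched but the reason the complex statement fails. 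The paper never intends a complex reading; it works over a real inner product space throughout, and the theorem is a tautology in that setting. Keep your first paragraph and delete the rest.
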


\begin{proof}
This follows from the fact that the eigenvalues are the critical values of $f_T$, which is real-valued.
\end{proof}

\subsection{Eigenpairs of a Square Tensor}

In order to define the eigenpairs of square (not necessarily symmetric) tensors,  we extend \cref{eigen_sym_grad_def}. However, since $f_T$ is not necessarily symmetric, each choice of $\nabla_i$ may result in a different eigenpair.

\begin{definition}[Coordinate-free eigenpairs of square tensors]\label{sq_tensor_eigenpair_def}
Let $f_T\colon V^k \to \R$ be the $k$-linear map corresponding to an order-$k$ square tensor $T$ on $V$. The pair
$(v,\lambda)\in V \times \R$ is an \textbf{mode-$i$ unit eigenpair} of $T$ if
\begin{align}
\nabla_i f_T(v, \ldots, v) &= k\lambda v \\
\norm{v}^k &= 1
\end{align}
\end{definition}

Upon a choice of basis, this definition agrees with that in \cite{Lim05}. Although this might appear to be a repetition of the results in \cite{Lim05}, note the crucial difference that now there is no reference to coordinates.

Relative to a basis, for $k=2$, $T$ is represented by a matrix $A$.  
In this case, there are two modes $1$ and $2$. 
Mode-$1$ eigenpairs of $T$ are the usual (right) eigenpairs of $A$,
and mode-$2$ eigenpairs of $T$ are the usual left eigenpairs of $A$, or equivalently, the usual right eigenpairs of $A^\top$.

\subsection{Singular Pairs of a Rectangular Tensor}

As before, we motivate our definitions by a variational definition of matrix singular values.

\begin{definition}[Variational definition of matrix singular values]\label{var_singular_matrix_def}
If the rectangular tensor on $V \times W$ has associated polynomial $f_T$, $\dim V = m$ and $\dim W = n$, its \textbf{singular vectors} are the critical points of $f_T$ on $S^{m-1} \times S^{n-1}$ and the associated \textbf{singular values} are the corresponding critical values.
\end{definition}

This can be proven to agree with the usual definition of matrix singular pairs by optimizing $f_T(\pmb{x}, \pmb{y})$ with the constraint $\norm{x} = 1 = \norm{y}$ through Lagrange multipliers:

\begin{equation}\label{singular_matrix_lagrangian_eq}
    L(\pmb{x}, \pmb{y}, \sigma', \sigma'') = f_T(\pmb{x}, \pmb{y}) - \sigma'(\norm{\pmb{x}} - 1) - \sigma''(\norm{\pmb{y}} - 1)
\end{equation}

To generalize to tensors, let $V_1, V_2, \ldots, V_k$ be real inner product spaces of dimensions $n_1, \ldots, n_k$ respectively.
A tensor on $V_1 \times  \cdots \times V_k$ is an element
of the tensor product $V_1\dual  \otimes \cdots \otimes V_k\dual$.
By the universal mapping property of the tensor product, such a tensor corresponds
to a $k$-linear functional on the Cartesian product
$V_1 \times \cdots \times V_k$.

\begin{definition}[Coordinate-free singular tuples of tensors]\label{coord_free_sing_def}
Let $f_T$ be the homogeneous polynomial corresponding to the tensor $T \in (V_1 \otimes \cdots \otimes V_k)\dual$. The \textbf{singular vectors} of the $T$ are the critical points of $f_T|_{S^{n_1-1} \times \cdots \times S^{n_k-1}}$ and the associated \textbf{singular values} are the corresponding critical values.
\end{definition}

This naturally agrees with the standard definition for matrices by \cref{var_singular_matrix_def}. We can again recast this definition in the language of \cite{Lim05}. Define a Langrangian analogous to \cref{singular_matrix_lagrangian_eq}:

\begin{equation}
    L(\pmb{x}_1,\cdots,\pmb{x}_k, \sigma^{(1)},\cdots,\sigma^{(k)}) = f_T(\pmb{x}_1,\cdots,\pmb{x}_k) - \sum_{j=1}^k \sigma^{(j)}(\norm{\pmb{x}_j}-1)
\end{equation}

For all $i$, enforcing $\nabla_{\sigma^{(i)}} L = 0$ yields $\norm{\pmb{x}_i} = 1$. Furthermore, $\nabla_{\pmb{x}_i} L = 0$ implies $\nabla_{\pmb{x}_i} f_T(\pmb{x}_1,\cdots,\pmb{x}_k) = \sigma^{(i)} \pmb{x}_i$ by \cref{grad_norm_lemma}. Multiplying both sides by $\pmb{x}_i^\top$ and using \cref{grad_f_T_lemma}, we obtain $\sigma^{(1)} = \cdots = \sigma^{(k)} = f_T(\pmb{x}_1,\cdots,\pmb{x}_k) := \sigma$, the singular value. This leads to \cref{singular_tensor_grad_def}.

\begin{definition}[Alternative to \cref{coord_free_sing_def}]\label{singular_tensor_grad_def}
Let $f_T\colon V_1 \times  \cdots \times V_k \to \R$ 
be the $k$-linear function corresponding to the tensor $T \in (V_1 \otimes \cdots \otimes V_k)\dual$.
$(v_1, \ldots, v_k, \sigma)\in V^1 \times \cdots \times v^k \times \R$ is a \textbf{singular tuple} of $T$ if
\begin{align}
\nabla_i f(v_1, \cdots, v_k) &= \sigma v_i \\
\norm{v_i} &= 1
\end{align}

\noindent for all $i \in [k]$.
\end{definition}

\section{Generalization to any norm}\label{any_norm_section}

In order to generalize singular tuples and eigenpairs to any norm, we do not refer to coordinates and do not evaluate $\nabla_{\pmb{x}_i} \norm{\pmb{x}_i}^k$.

\begin{definition}[Coordinate-free eigenpairs of square tensors for any norm]\label{sq_tensor_eigenpair_any_norm_def}
Let $f_T\colon V^k \to \R$ be the $k$-linear map corresponding to an order-$k$ square tensor $T$ on $V$. The pair
$(v,\lambda)\in V \times \R$ is an \textbf{mode-$i$ unit eigenpair} of $T$ if
\begin{align}
\nabla_i f_T(v, \ldots, v) &= \lambda \nabla_i \norm{v}^k \\
\norm{v}^k &= 1
\end{align}
\end{definition}

For singular values, we still assume an $L^p$ norm for any $p$ since we use \cref{grad_norm_lemma}.

\begin{definition}[Coordinate-free singular tuples of rectangular tensors in any norm]\label{singular_tensor_grad_any_norm_def}
Let $f_T\colon V_1 \times  \cdots \times V_k \to \R$ 
be the $k$-linear function corresponding to the tensor $T \in (V_1 \otimes \cdots \otimes V_k)\dual$.
$(v_1, \ldots, v_k, \sigma)\in V^1 \times \cdots \times v^k \times \R$ is a \textbf{singular tuple} of $T$ if
\begin{align}
\nabla_i f(v_1, \cdots, v_k) &= \sigma \nabla_i \norm{v_i} \\
\norm{v_i} &= 1
\end{align}

\noindent for all $i \in [k]$.
\end{definition}

\section{Eigenpairs constraints from Morse Theory}\label{morse_section}

\subsection{Morse theory overview}

Since our definition of eigenpairs are related to optimizating a function on $S^{n-1}$, we obtain restrictions on eigenpairs by using results in Morse theory \cite{Mil16}, linking tensors to algebraic topology. We mention a few preliminary results from Morse theory.

\begin{definition}[\cite{Mil16}]
Let $M$ be a manifold and $f \colon M \rightarrow \R$. A critical point $p$ of $f$ is \textbf{nondegenrate} if the Hessian $H(f)$ of $f$ is nonsingular at $p$.
\end{definition}

Recall that the Hessian of $f$ is its matrix of second partial derivatives:

\begin{equation}
    \Big[ H(f) \Big]_{i,j} := \frac{\partial^2 f}{\partial x_i \partial x_j}
\end{equation}

\noindent where the $x_i$ are local coordinates on $M$. By \cite{Mil16}, the Hessian is coordinate-invariant.

\begin{theorem}[\cite{Mil16}]\label{milnor_bijection_lemma}
If a smooth function $f \colon M \rightarrow R$ on a manifold $M$ has only nondegenerate critical points, then there is a cell decomposition of $M$ such that there exists a bijection

\begin{equation}
    \text{critical point of index } \lambda \leftrightarrow \text{cell of dimension } \lambda
\end{equation}
\end{theorem}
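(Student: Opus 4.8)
The plan is to follow the classical development of Morse theory as in \cite{Mil16}, under the (implicit) assumption that $M$ is compact --- or, more generally, that $f$ is proper and bounded below, so that every sublevel set $M^a := f^{-1}(-\infty, a]$ is compact; without such a hypothesis the statement can fail. Under compactness $f$ has only finitely many critical points $p_1, \dots, p_r$, with indices $\lambda_1, \dots, \lambda_r$. List the distinct critical values as $c_1 < \cdots < c_s$ and pick regular values $a_0 < c_1 < a_1 < \cdots < a_{s-1} < c_s < a_s$, so that $M^{a_0} = \emptyset$ and $M^{a_s} = M$. The strategy is to understand how $M^a$ changes as $a$ crosses each $c_i$, and then assemble $M$ by induction on $i$.

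First I would prove the Morse Lemma: near a nondegenerate critical point $p_j$ there are coordinates in which $f = f(p_j) - x_1^2 - \cdots - x_{\lambda_j}^2 + x_{\lambda_j+1}^2 + \cdots + x_n^2$. This is done by an inductive ``completing the square'' argument, diagonalizing the symmetric second-derivative form one variable at a time while keeping the changes of coordinates smooth; it identifies $\lambda_j$ as a diffeomorphism invariant and supplies the local model for the cell to be attached at $p_j$. Next come the two fundamental lemmas on sublevel sets. (a) If $f^{-1}[a,b]$ is compact and free of critical points, then $M^a$ is a deformation retract of $M^b$; one flows along $-\nabla f/\norm{\nabla f}^2$ (gradient with respect to any Riemannian metric, cut off by a bump function outside a neighborhood of $f^{-1}[a,b]$), which slides $M^b$ down onto $M^a$ in unit time. (b) If $f^{-1}[c-\varepsilon, c+\varepsilon]$ is compact and its only critical points are those at level $c$, then $M^{c+\varepsilon}$ has the homotopy type of $M^{c-\varepsilon}$ with one cell of dimension $\lambda_j$ attached for each critical point $p_j$ at level $c$; here the Morse Lemma chart at each $p_j$ gives the characteristic map of the cell, and an explicit deformation --- gradient flow outside the charts, a straightening of the region bounded by the descending disk inside them --- produces the retraction. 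Since critical points sharing a level can be given disjoint Morse charts, their cells are attached along disjoint maps and the argument runs in parallel.

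Finally, the induction: $M^{a_0} = \emptyset$, and passing from $M^{a_{i-1}}$ to $M^{a_i}$ attaches, up to homotopy, exactly one cell of dimension $\lambda_j$ per critical point $p_j$ with $f(p_j) = c_i$ (apply lemma (b), then lemma (a) to identify $M^{a_i}$ with $M^{c_i+\varepsilon}$ and $M^{a_{i-1}}$ with $M^{c_i-\varepsilon}$). After $s$ steps, $M^{a_s} = M$ is homotopy equivalent to a CW complex with a single cell of dimension $\lambda$ for every critical point of index $\lambda$, and the bijection asserted in the statement is the correspondence $p_j \leftrightarrow (\text{its attached cell})$.

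The main obstacle is lemma (b): converting the local quadratic normal form into a genuine homotopy equivalence $M^{c+\varepsilon} \simeq M^{c-\varepsilon}\cup(\text{cells})$. This requires carefully constructing the auxiliary region around the critical points, verifying that $M^{c-\varepsilon}$ together with the descending disks is a deformation retract of $M^{c+\varepsilon}$, and checking that the attaching maps give cofibrations, so that one genuinely lands in the category of CW pairs rather than merely in spaces with isomorphic homology. Everything else --- the Morse Lemma, the flow retraction of (a), and the bookkeeping over the finitely many critical values --- is comparatively routine.
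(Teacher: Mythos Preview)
The paper does not give its own proof of this theorem: it is quoted from \cite{Mil16} and used as a black box. Your sketch is exactly the classical argument from Milnor's book (Morse Lemma, the two passing-a-critical-level lemmas, and induction over the finitely many critical values), so it matches precisely what the paper is citing; your remark that compactness (or properness and boundedness below) is an implicit hypothesis is correct and worth keeping.
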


For conciseness, we use the following notation.

\begin{definition}
For a manifold $M$, let

\begin{align}
    a_\lambda &:= \text{ number of cells of dimension } \lambda\\
    b_\lambda &:= \dim H_\lambda(M) \\
    c_\lambda &:= \text{ number of critical points of index } \lambda
\end{align}
\end{definition}

\begin{lemma}[Weak Morse Inequality \cite{Mil16}]\label{weak_morse_inequality_lemma}
For a manifold $M$,
\begin{equation}
    b_\lambda \leq c_\lambda
\end{equation}
\end{lemma}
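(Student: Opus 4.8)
The plan is to obtain the Weak Morse Inequality as an immediate consequence of the bijection in \cref{milnor_bijection_lemma} together with the fact that cellular homology can be computed from the associated cellular chain complex. Concretely, let $M$ be a manifold on which some Morse function $f$ has only nondegenerate critical points (the hypothesis of \cref{milnor_bijection_lemma}). By that theorem there is a CW decomposition of $M$ in which the number of $\lambda$-cells equals $c_\lambda$, the number of critical points of index $\lambda$; in the notation just introduced, this says $a_\lambda = c_\lambda$ for every $\lambda$. So it suffices to prove the purely topological statement $b_\lambda \le a_\lambda$, i.e.\ that the $\lambda$-th Betti number of a CW complex is bounded by its number of $\lambda$-cells.

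For that, I would recall the cellular chain complex $(C_\bullet, \partial_\bullet)$ of the CW structure, where $C_\lambda$ is the free abelian group on the $\lambda$-cells, so $\operatorname{rank} C_\lambda = a_\lambda$. The cellular homology $H_\lambda(C_\bullet) = \ker \partial_\lambda / \operatorname{im} \partial_{\lambda+1}$ is canonically isomorphic to the singular homology $H_\lambda(M)$, so $b_\lambda = \dim_{\mathbb{Q}} \big(H_\lambda(C_\bullet)\otimes \mathbb{Q}\big) = \operatorname{rank} \ker\partial_\lambda - \operatorname{rank}\operatorname{im}\partial_{\lambda+1}$. Since $\ker\partial_\lambda$ is a subgroup of $C_\lambda$, its rank is at most $\operatorname{rank} C_\lambda = a_\lambda$, and dropping the nonnegative term $\operatorname{rank}\operatorname{im}\partial_{\lambda+1}$ only increases the quantity, giving
\begin{equation}
b_\lambda \;=\; \operatorname{rank}\ker\partial_\lambda - \operatorname{rank}\operatorname{im}\partial_{\lambda+1} \;\le\; \operatorname{rank}\ker\partial_\lambda \;\le\; a_\lambda \;=\; c_\lambda .
\end{equation}

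The only mild subtlety — and the step I would flag as the one requiring care rather than difficulty — is the passage from the CW decomposition to the cellular chain complex and the identification of its homology with $H_\lambda(M)$: strictly speaking \cref{milnor_bijection_lemma} as stated only gives a bijection of cells with critical points, so one invokes the standard fact (e.g.\ from the cited \cite{Mil16}, or any algebraic topology reference) that a finite CW complex has a cellular chain complex whose $\lambda$-th term is free of rank $a_\lambda$ and whose homology recovers singular homology. Everything else is the elementary rank inequality above. Since the paper is content to cite \cite{Mil16} for the background Morse theory, I would keep the proof to the two lines: $b_\lambda \le a_\lambda$ by the rank computation, and $a_\lambda = c_\lambda$ by \cref{milnor_bijection_lemma}.
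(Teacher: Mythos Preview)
Your argument is correct and is essentially the standard derivation of the weak Morse inequalities. However, the paper does not actually give a proof of this lemma: it is stated as a cited result from \cite{Mil16} with no accompanying \texttt{proof} environment. So there is nothing to compare against; you have supplied a proof where the paper is content to quote the literature. If anything, your write-up goes a step beyond what the paper needs, since the authors treat \cref{weak_morse_inequality_lemma}, \cref{strong_morse_inequality_lemma}, and \cref{milnor_5.4_lemma} uniformly as black-box imports from Milnor.
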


\begin{lemma}[Strong Morse Inequality \cite{Mil16}]\label{strong_morse_inequality_lemma}
For a manifold $M$,

\begin{equation}
    b_\lambda  - b_{\lambda-1} + b_{\lambda-2} - \cdots \pm b_0 \leq c_\lambda - c_{\lambda-1} + c_{\lambda-2} - \cdots \pm c_0
\end{equation}
\end{lemma}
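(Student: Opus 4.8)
The plan is to derive the strong Morse inequalities from \cref{milnor_bijection_lemma} by reducing them to a purely linear-algebraic fact about a finite chain complex. Throughout I would take homology with coefficients in a fixed field, so that each $b_\lambda = \dim H_\lambda(M)$ is genuinely a vector-space dimension, and I would assume, as in the intended applications where $M$ is a sphere or a product of spheres, that $f$ has only finitely many, necessarily nondegenerate, critical points. By \cref{milnor_bijection_lemma}, $M$ then has the homotopy type of a finite CW complex with $a_\lambda = c_\lambda$ cells in dimension $\lambda$. Since cellular homology computes singular homology, the claim reduces to the following statement about the cellular chain complex $(C_\bullet, \partial_\bullet)$, where $\dim C_\lambda = a_\lambda$: for every $\lambda$ one has $\sum_{j=0}^{\lambda} (-1)^{\lambda-j} b_j \le \sum_{j=0}^{\lambda} (-1)^{\lambda-j} a_j$.

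To prove this, write $r_\lambda := \operatorname{rank}\partial_\lambda$, with $r_0 = 0$ and $r_\lambda = 0$ above the top dimension. Rank--nullity for $\partial_\lambda\colon C_\lambda \to C_{\lambda-1}$ gives $a_\lambda = \dim\ker\partial_\lambda + r_\lambda$, while $b_\lambda = \dim\ker\partial_\lambda - \dim\operatorname{im}\partial_{\lambda+1} = \dim\ker\partial_\lambda - r_{\lambda+1}$ from $H_\lambda = \ker\partial_\lambda / \operatorname{im}\partial_{\lambda+1}$. Subtracting yields the single identity $a_\lambda - b_\lambda = r_\lambda + r_{\lambda+1}$. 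Feeding this into the alternating sum and telescoping---every $r_j$ with $1 \le j \le \lambda$ occurs once with each sign and cancels, $r_0 = 0$, and only $r_{\lambda+1}$ survives---gives $\sum_{j=0}^{\lambda}(-1)^{\lambda-j}(a_j - b_j) = r_{\lambda+1} \ge 0$, which is exactly the desired inequality; replacing $a_j$ by $c_j$ finishes it. Packaged as generating functions, the same identity reads $\sum_\lambda c_\lambda t^\lambda - \sum_\lambda b_\lambda t^\lambda = (1+t)\sum_\lambda r_{\lambda+1} t^\lambda$ with nonnegative coefficients on the right; setting $t=-1$ recovers $\sum_\lambda (-1)^\lambda c_\lambda = \sum_\lambda(-1)^\lambda b_\lambda$ as a consistency check, and the weak inequality $b_\lambda \le c_\lambda$ of \cref{weak_morse_inequality_lemma} follows by adding the strong inequalities for two consecutive values of $\lambda$.

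The computations are elementary; the one substantive step---the main obstacle---is the reduction itself, namely that a Morse function furnishes a CW structure realizing the critical-point counts as cell counts, which is precisely the content of \cref{milnor_bijection_lemma}. Once that is invoked, the rest is bookkeeping. The only other point needing a word of care is the coefficient ring: over $\mathbb{Z}$ one reads $b_\lambda$ as the rank of $H_\lambda(M;\mathbb{Z})$ and reruns the rank--nullity computation after tensoring with $\mathbb{Q}$, which changes nothing; and in any case the spheres and products of spheres relevant to \cref{morse_section} have torsion-free homology, so the distinction is immaterial there.
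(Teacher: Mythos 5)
Your proof is correct, but note that the paper itself gives no argument here: the lemma is simply quoted from Milnor, whose proof runs through the subadditivity of the functions $S_\lambda(X,Y)=\operatorname{rank}H_\lambda(X,Y)-\operatorname{rank}H_{\lambda-1}(X,Y)+\cdots$ applied to the filtration of $M$ by sublevel sets, and never invokes a CW structure. You instead take \cref{milnor_bijection_lemma} as the input and reduce the inequalities to rank--nullity bookkeeping in the cellular chain complex; the identity $a_\lambda-b_\lambda=r_\lambda+r_{\lambda+1}$, the telescoping to $r_{\lambda+1}\ge 0$, the derivation of \cref{weak_morse_inequality_lemma} by adding two consecutive strong inequalities, and the coefficient-field remark are all sound. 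What your route buys is an elementary, self-contained argument once the CW statement is granted, and it makes the Euler-characteristic identity used later in \cref{morse_section} transparent. What it costs is that it is logically heavier than necessary: the CW theorem is itself proved in Milnor by the same handle-attachment analysis that yields the inequalities directly, and your reduction needs finitely many critical points (equivalently, compactness of the sublevel sets) --- automatic for the paper's setting $M=S^{n-1}$ with a nondegenerate $f$, but worth stating as a hypothesis. One small precision to add: \cref{milnor_bijection_lemma} produces a CW complex homotopy equivalent to $M$ rather than a cell decomposition of $M$ itself, which suffices for your argument only because singular homology is a homotopy invariant; say so explicitly when you pass from $M$ to the cellular chain complex.
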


\begin{lemma}[\cite{Mil16}, Corollary 5.4]\label{milnor_5.4_lemma}
    If $c_{\lambda+1} = c_{\lambda-1}=0$, then $b_\lambda = c_\lambda$. ($b_{\lambda+1} = b_{\lambda-1} = 0$ also follows.) 
\end{lemma}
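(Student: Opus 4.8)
The plan is to deduce this entirely from the Morse inequalities already recorded in \cref{strong_morse_inequality_lemma} (and, as it turns out, without even needing \cref{weak_morse_inequality_lemma}), following Milnor's own argument. The organizing device is to package the strong inequalities as the non-negativity of a single sequence. Writing $\beta_\lambda := b_\lambda - b_{\lambda-1} + \cdots \pm b_0$ and $\gamma_\lambda := c_\lambda - c_{\lambda-1} + \cdots \pm c_0$ for the alternating partial sums (and adopting the convention that $b_j, c_j, \beta_j, \gamma_j$ all vanish for $j < 0$), I set $q_\lambda := \gamma_\lambda - \beta_\lambda$. Then \cref{strong_morse_inequality_lemma} says exactly that $q_\lambda \geq 0$ for all $\lambda$, and a short telescoping computation (both $\beta_\lambda + \beta_{\lambda-1}$ and $\gamma_\lambda + \gamma_{\lambda-1}$ collapse, to $b_\lambda$ and $c_\lambda$ respectively) yields the identity $q_\lambda + q_{\lambda-1} = c_\lambda - b_\lambda$ valid for every $\lambda$.

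With that identity established, the proof is three short applications of it. First, at index $\lambda + 1$ the hypothesis $c_{\lambda+1} = 0$ gives $q_{\lambda+1} + q_\lambda = -b_{\lambda+1}$; the left-hand side is $\geq 0$ and the right-hand side is $\leq 0$, so both vanish, forcing $q_{\lambda+1} = q_\lambda = 0$ and $b_{\lambda+1} = 0$. Second, the same maneuver at index $\lambda - 1$, using $c_{\lambda-1} = 0$, forces $q_{\lambda-1} = q_{\lambda-2} = 0$ and $b_{\lambda-1} = 0$, which already disposes of the parenthetical claim. Third, the identity at index $\lambda$ reads $q_\lambda + q_{\lambda-1} = c_\lambda - b_\lambda$, and since the first two steps have shown $q_\lambda = q_{\lambda-1} = 0$, this is exactly $b_\lambda = c_\lambda$.

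There is no genuinely hard step here; the only thing requiring care is bookkeeping. I would want to verify the telescoping identity $q_\lambda + q_{\lambda-1} = c_\lambda - b_\lambda$ with the signs written out explicitly, since that is the one place a sign slip could creep in, and I would check that the small-index cases $\lambda = 0$ and $\lambda = 1$ go through uniformly — with the convention that everything with a negative subscript is zero, they do (for $\lambda = 0$, both the hypothesis $c_{\lambda-1} = 0$ and the conclusion $b_{\lambda-1} = 0$ are vacuous, and the argument degenerates to reading $b_0 = c_0$ off $q_0 = q_{-1} = 0$). Beyond that, the argument is just the three-line deduction above.
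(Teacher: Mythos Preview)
Your proof is correct and is precisely Milnor's argument; the paper itself does not supply a proof but simply cites \cite{Mil16}, Corollary 5.4, so there is nothing further to compare against.
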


\subsection{Resulting constraints on tensors eigenvalues}

We carry over the notion of nondegeneracy to eigenvectors of tensors as follows.

\begin{definition}
Let $v$ be an eigenvector which is a nondegenerate critical point of $f \colon S^{n-1} \rightarrow \R$. We define the \textbf{index} of $v$ to be its index as a nondegenerate critical point.
\end{definition}

\begin{definition}
A tensor $T$ is called \textbf{nondegenerate} if all its eigenvectors are nondegenerate.
\end{definition}

Now we are ready to obtain constraints on the eigenpairs of symmetric tensors.

\begin{theorem}
A symmetric, nondegenerate tensor has at least one eigenvector of index $n-1$.
\end{theorem}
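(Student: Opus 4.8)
The plan is to reduce the statement to a fact about Morse functions on the sphere $S^{n-1}$ and then invoke the strong Morse inequalities. Recall from \cref{sym_tensor_eigenpair_def} that the unit eigenvectors of $T$ are exactly the critical points of $g := f_T|_{S^{n-1}}$, and that $T$ being nondegenerate means every such critical point is nondegenerate, so $g$ is a Morse function on the compact manifold $S^{n-1}$. The claim ``$T$ has an eigenvector of index $n-1$'' then becomes ``$g$ has a critical point of index $n-1$'', i.e. $c_{n-1}\ge 1$ in the notation fixed before \cref{weak_morse_inequality_lemma}. Since $n-1 = \dim S^{n-1}$, a critical point of index $n-1$ is precisely a local maximum, so at bottom this is the assertion that a Morse function on $S^{n-1}$ must attain a local maximum at some nondegenerate point — which is intuitively clear because $g$ is continuous on a compact set and hence attains a global maximum, and nondegeneracy forces that maximum to have full index.

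First I would make the topological input explicit: $S^{n-1}$ is connected (for $n\ge 2$) and, being a closed orientable $(n-1)$-manifold, has $b_{n-1} = \dim H_{n-1}(S^{n-1}) = 1$. Second I would apply the strong Morse inequality (\cref{strong_morse_inequality_lemma}) with $\lambda = n-1$:
\begin{equation}
b_{n-1} - b_{n-2} + \cdots \pm b_0 \;\le\; c_{n-1} - c_{n-2} + \cdots \pm c_0 .
\end{equation}
For $S^{n-1}$ with $n\ge 2$ the Betti numbers are $b_0 = b_{n-1} = 1$ and $b_\lambda = 0$ otherwise (this is the only place the paper needs the homology of the sphere; it can be quoted as standard, e.g. from the same source \cite{Mil16}). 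In fact it is cleanest to instead use the complementary Morse inequality obtained by running the alternating sum down from the top dimension: applying \cref{strong_morse_inequality_lemma} to $-g$, whose critical points of index $\mu$ are the critical points of $g$ of index $(n-1)-\mu$, and taking $\lambda = 0$ there, gives $b_0(S^{n-1}) \le c_{n-1}$, i.e. $1 \le c_{n-1}$. (Equivalently one notes $c_{n-1} \ge b_{n-1} = 1$ directly from the weak Morse inequality \cref{weak_morse_inequality_lemma}, which already suffices.) Either route yields $c_{n-1}\ge 1$, which is exactly the existence of an eigenvector of index $n-1$.

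The main obstacle is essentially bookkeeping rather than depth: one must be careful that the Morse-theoretic machinery quoted (\cref{milnor_bijection_lemma}, \cref{weak_morse_inequality_lemma}, \cref{strong_morse_inequality_lemma}) applies — it does, since $S^{n-1}$ is compact and $g$ is smooth with only nondegenerate critical points — and one should treat the low-dimensional edge case $n=1$ separately (there $S^0$ is two points, every function is Morse with two critical points of index $0 = n-1$, so the statement holds trivially). It is also worth remarking, as a sanity check that can be folded into the proof, that the weak inequality $c_{n-1}\ge b_{n-1}=1$ is already enough, so the argument does not actually require the full strength of \cref{strong_morse_inequality_lemma}; the only nontrivial ingredient is $H_{n-1}(S^{n-1})\neq 0$, i.e. that the sphere is not contractible.
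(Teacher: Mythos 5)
Your proposal is correct and, at its core, is the same argument as the paper's: the paper simply applies the weak Morse inequality (\cref{weak_morse_inequality_lemma}) with $b_{n-1}(S^{n-1})=1$ to conclude $c_{n-1}\ge 1$, which is exactly the shortcut you yourself note suffices. The detour through the strong Morse inequality applied to $-g$ and the $n=1$ edge case are fine but not needed beyond that one-line weak-inequality step.
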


\begin{proof}
For $M = S^{n-1}$, $b_{n-1} = 1 \leq c_{n_1}$ by \cref{weak_morse_inequality_lemma}.
\end{proof}

\begin{theorem}
For a nondegenerate, symmetric $T \in (\R^n)^{\otimes d}$, let $n_\lambda$ be the number of eigenvalues with index $\lambda$.

\begin{equation}
    \sum (-1)^\lambda n_\lambda = 
    \begin{cases}
    0 & \text{ if $n$ is even}, \\
    2 & \text{ if $n$ is odd}.
    \end{cases}
\end{equation}
\end{theorem}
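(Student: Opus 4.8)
The plan is to recognize the left-hand side $\sum_\lambda (-1)^\lambda n_\lambda$ as the Euler characteristic of $S^{n-1}$ computed via a Morse function. By \cref{sym_tensor_eigenpair_def}, the eigenvectors of $T$ are exactly the critical points of $f := f_T|_{S^{n-1}}$, and since $T$ is nondegenerate, every such critical point is a nondegenerate critical point of $f$; thus $f$ is a Morse function on $S^{n-1}$. The quantity $n_\lambda$ is then precisely $c_\lambda$, the number of critical points of index $\lambda$, so $\sum_\lambda (-1)^\lambda n_\lambda = \sum_\lambda (-1)^\lambda c_\lambda$.

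Next I would invoke \cref{milnor_bijection_lemma}: a Morse function gives a CW decomposition of $S^{n-1}$ with one cell of dimension $\lambda$ for each critical point of index $\lambda$, so $a_\lambda = c_\lambda$ for all $\lambda$. The Euler characteristic of a finite CW complex is the alternating sum of the numbers of cells, hence
\begin{equation*}
\sum_\lambda (-1)^\lambda c_\lambda = \sum_\lambda (-1)^\lambda a_\lambda = \chi(S^{n-1}).
\end{equation*}
Alternatively, one can get the same identity from the strong Morse inequalities (\cref{strong_morse_inequality_lemma}): applying the inequality at the top index $\lambda = n-1$ and also to $-f$ (whose index-$\lambda$ critical points are the index-$(n-1-\lambda)$ critical points of $f$) forces $\sum_\lambda(-1)^\lambda c_\lambda = \sum_\lambda (-1)^\lambda b_\lambda$, and the right-hand side is $\chi(S^{n-1})$ by definition of the Betti numbers $b_\lambda$. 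Either route reduces the problem to computing $\chi(S^{n-1})$.

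Finally I would compute $\chi(S^{n-1})$ from its well-known homology: $H_0(S^{n-1}) \simeq H_{n-1}(S^{n-1}) \simeq \mathbb{R}$ and all other homology vanishes (with the caveat that for $n=1$, $S^0$ is two points and $b_0 = 2$). Hence $\chi(S^{n-1}) = 1 + (-1)^{n-1}$, which equals $0$ when $n$ is even and $2$ when $n$ is odd, matching the claimed case split; the $n=1$ case gives $\chi(S^0) = 2$, consistent with $n$ odd. I do not expect a serious obstacle here — the argument is a direct packaging of standard Morse theory — but the one point requiring a little care is confirming that $T$ nondegenerate genuinely makes $f_T|_{S^{n-1}}$ a Morse function in Milnor's sense (i.e.\ that the Hessian condition in the definition of a nondegenerate tensor is exactly nondegeneracy of the Hessian of the restricted function), and noting the small-dimensional edge case $n=1$ separately so the homology input is stated correctly.
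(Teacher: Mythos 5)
Your proposal is correct and follows essentially the same route as the paper: identify $n_\lambda$ with $c_\lambda$, use \cref{milnor_bijection_lemma} to equate the alternating sum of critical points with the alternating sum of cells, i.e.\ $\chi(S^{n-1})$, and evaluate it from the homology of the sphere. Your extra care about the $n=1$ edge case (where $b_0 = 2$ for $S^0$) is a small refinement the paper glosses over, but it does not change the argument.
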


\begin{proof}
The Euler characteristic obeys

\begin{equation}
    \chi(M) := \sum (-1)^\lambda b_\lambda \stackrel{\text{(1)}}{=} \sum (-1)^{\lambda} a_\lambda \stackrel{\text{(2)}}{=} \sum (-1)^\lambda c_\lambda
\end{equation}

\noindent where equality (1) comes from algebraic topology and equality (2) follows from \cref{milnor_bijection_lemma}. In our case, the manifold is $M = S^{n-1}$. For the sphere,

\begin{equation}
    H_k(S^n) = 
    \begin{cases}
    \mathbb{Z} & k \in \{0,n\}, \\
    \{0\}          & k \not\in \{0, n\}.
    \end{cases}
\end{equation}

Hence, $b_0 = b_{n-1} = 1$ and $b_\lambda = 0$ for $\lambda \not\in \{0, n-1\}$. If $n$ is odd, $\sum (-1)^\lambda b_\lambda = 2$, otherwise the sum is $0$. By noting that $n_\lambda = c_\lambda$, the result follows.

\end{proof}

\begin{theorem}
Let $T \in (\mathbb{R}^n)^{\otimes d}$ be a symmetric, nondegenerate tensor.

\begin{enumerate}[(i)]
    \item If $c_{0} \neq 1$, then $c_{1} > 0$.
    \item If, for $\lambda \in \{2, 3, \cdots, n-2\}$, $c_\lambda > 0$, then at least one of $c_{\lambda-1},c_{\lambda+1}$ is $> 0$.
    \item If $c_n > 0$, then $c_{n-1} > 0$.
    \item If $c_{n-1} \neq 1$, then at least of one $c_{n-2}, c_n$ is $>0$.
\end{enumerate}
\end{theorem}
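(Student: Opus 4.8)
The plan is to reduce all four items to a single tool, \cref{milnor_5.4_lemma} (Milnor's Corollary~5.4), applied at four different values of the index. First I would note that if $T$ is symmetric and nondegenerate then, by definition, every critical point of $f := f_T|_{S^{n-1}}$ is nondegenerate, so $f$ is a Morse function on $S^{n-1}$ and the Morse-theoretic results above apply with $M = S^{n-1}$. The only topological input needed is the homology of the sphere, already recorded above: for $n \geq 2$ (which we assume throughout, since for $n=1$ item~(i) already fails), $S^{n-1}$ is connected and closed, so $b_0 = b_{n-1} = 1$ and $b_\lambda = 0$ for all other $\lambda$. I would also record the elementary bounds $c_j = 0$ whenever $j < 0$ or $j > n-1$, since on the $(n-1)$-dimensional manifold $S^{n-1}$ the Morse index of any critical point lies in $\{0,1,\ldots,n-1\}$; these make the endpoint applications of \cref{milnor_5.4_lemma} legitimate without needing a separate boundary version.

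Each item is then the contrapositive of one application of \cref{milnor_5.4_lemma}. For (i), take $\lambda = 0$: since $c_{-1} = 0$, the hypothesis collapses to $c_1 = 0$, which would force $c_0 = b_0 = 1$; hence $c_0 \neq 1$ implies $c_1 > 0$. For (ii), take each $\lambda \in \{2,\ldots,n-2\}$, where $b_\lambda = 0$ (this range is nonempty only when $n \geq 4$, and otherwise (ii) is vacuous): if $c_{\lambda-1} = c_{\lambda+1} = 0$ then $c_\lambda = b_\lambda = 0$, contradicting $c_\lambda > 0$, so one of $c_{\lambda\pm 1}$ is positive; note every index occurring here lies in $\{1,\ldots,n-1\}$. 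For (iii), take $\lambda = n$: here $b_n = 0$ and $c_{n+1} = 0$, so the hypothesis collapses to $c_{n-1} = 0$, which would force $c_n = b_n = 0$; hence $c_n > 0$ implies $c_{n-1} > 0$ --- in fact (iii) is vacuously true because $c_n = 0$ on the $(n-1)$-sphere, but it also falls out formally from this application. For (iv), take $\lambda = n-1$: $c_n = c_{n-2} = 0$ would force $c_{n-1} = b_{n-1} = 1$; hence $c_{n-1} \neq 1$ implies $c_{n-2} > 0$ or $c_n > 0$.

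The argument is essentially bookkeeping, and the only point requiring a little care is the opening reduction --- making precise that ``$T$ symmetric and nondegenerate'' is exactly ``$f_T|_{S^{n-1}}$ is a Morse function,'' so that \cref{milnor_5.4_lemma} applies --- together with the range conventions $c_j = 0$ for $j \notin \{0,\ldots,n-1\}$ that license the $\lambda = 0$ and $\lambda = n$ cases. I do not expect any genuine obstacle beyond this; one could alternatively extract (i)--(iv) directly from the strong Morse inequalities (\cref{strong_morse_inequality_lemma}) together with the Betti numbers of $S^{n-1}$, but routing everything through \cref{milnor_5.4_lemma} is cleaner.
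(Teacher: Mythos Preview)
Your proof is correct and follows essentially the same route as the paper: both take the contrapositive of \cref{milnor_5.4_lemma} at $\lambda = 0$, $\lambda \in \{2,\ldots,n-2\}$, $\lambda = n$, and $\lambda = n-1$, using $b_0 = b_{n-1} = 1$, $b_\lambda = 0$ otherwise, and the conventions $c_{-1} = c_{n+1} = 0$. Your write-up is in fact more careful than the paper's, explicitly justifying the Morse-function setup and the endpoint conventions, and correctly observing that (iii) is vacuous since $c_n = 0$ on $S^{n-1}$.
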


\begin{proof}
We use the contrapositive of \cref{milnor_5.4_lemma}, which says that if $b_\lambda \neq c_\lambda$, then at least one of $c_{\lambda+1}, c_{\lambda-1}$ is nonzero. Each item follows since

\begin{enumerate}[(i)]
    \item $b_0=1$ and $c_{-1}$ cannot be nonzero since it does not exist.
    \item If $\lambda \in \{2, 3, \cdots, n-2\}$, $b_\lambda = 0$.
    \item $b_n=0$ and $c_{n+1}$ cannot be nonzero since it does not exist.
    \item $b_{n-1} = 1$.
\end{enumerate}
\end{proof}

\printbibliography

\appendix

\section{Lemmas for variational definitions}\label{appendix_var_lemmas}

Here we prove some lemmas that are used in the definitions of tensor singular tuples and eigenpairs.

\begin{lemma}[\cite{Lim05}]\label{grad_norm_lemma}
Let $1 < p < \infty$ and $\mathbb{R}^n \ni \pmb{x} \neq \pmb{0}$. Then

\begin{equation}
    \nabla \norm{\pmb{x}}_p = \frac{\varphi_{p-1}(\pmb{x})}{\norm{\pmb{x}}_p^{p-1}}
\end{equation}

\noindent where

\begin{align}
\sgn(x) &:=
\begin{cases}
+1 & \text{ if } x>0 \\
0 & \text{ if } x=0 \\
-1 & \text{ if } x<0
\end{cases} \\
\varphi_p(\pmb{x}) &:= [\sgn(x_1)|x_1|^p, \cdots, \sgn(x_n)|x_n|^p]^\top
\end{align}
\end{lemma}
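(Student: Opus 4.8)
The plan is to compute the partial derivatives of $\norm{\pmb{x}}_p = \big(\sum_{i=1}^n |x_i|^p\big)^{1/p}$ directly and read off the gradient componentwise. The first step I would carry out is the one-variable observation that, for $1 < p < \infty$, the function $t \mapsto |t|^p$ is continuously differentiable on all of $\R$ with
\[
\frac{d}{dt}|t|^p = p\,\sgn(t)\,|t|^{p-1}.
\]
Away from $t = 0$ this is immediate by writing $|t|^p = (t^2)^{p/2}$ and applying the chain rule; at $t = 0$ the difference quotient $|t|^p/t = \sgn(t)\,|t|^{p-1}$ tends to $0$ because $p - 1 > 0$, which matches the formula (and, incidentally, shows why the hypothesis $p > 1$ is needed: for $p = 1$ the derivative would fail to exist at the origin).

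The second step is the chain-rule computation for the full norm. Since $\pmb{x} \neq \pmb{0}$, the quantity $S(\pmb{x}) := \sum_i |x_i|^p$ is strictly positive, so the outer map $u \mapsto u^{1/p}$ is smooth in a neighborhood of $u = S(\pmb{x})$ and differentiation under the composition is legitimate. Using the first step for $\partial S/\partial x_j$,
\[
\frac{\partial}{\partial x_j}\norm{\pmb{x}}_p = \frac{1}{p}\,S(\pmb{x})^{\frac{1}{p}-1}\cdot \frac{\partial S}{\partial x_j}(\pmb{x}) = \frac{1}{p}\,S(\pmb{x})^{\frac{1}{p}-1}\cdot p\,\sgn(x_j)\,|x_j|^{p-1}.
\]
Since $S(\pmb{x})^{\frac{1}{p}-1} = \norm{\pmb{x}}_p^{\,1-p} = \norm{\pmb{x}}_p^{-(p-1)}$, this collapses to $\norm{\pmb{x}}_p^{-(p-1)}\sgn(x_j)\,|x_j|^{p-1}$, which is exactly the $j$-th entry of $\varphi_{p-1}(\pmb{x})/\norm{\pmb{x}}_p^{p-1}$. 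Assembling these entries for $j = 1, \ldots, n$ yields the claimed identity $\nabla\norm{\pmb{x}}_p = \varphi_{p-1}(\pmb{x})/\norm{\pmb{x}}_p^{p-1}$.

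The only genuine subtlety — the main obstacle, such as it is — is the behavior of $|t|^p$ at $t = 0$, since at a point $\pmb{x}$ with some vanishing coordinates one must know the relevant partial derivatives still exist and that the chain rule applies. I would therefore spend the bulk of the write-up pinning down that $|t|^p \in C^1(\R)$ for $p > 1$, and keep the differentiation of the composite function itself brief, since it is routine.
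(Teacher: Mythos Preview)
Your proof is correct and follows essentially the same route as the paper: compute each partial derivative of $\norm{\pmb{x}}_p$ via the chain rule, using $\frac{d}{dt}|t|^p = p\,\sgn(t)\,|t|^{p-1}$, and then read off the gradient componentwise. The only difference is that you are more careful than the paper about justifying differentiability of $|t|^p$ at $t=0$ and about why the hypothesis $p>1$ is needed, which is a welcome bit of added rigor but not a different argument.
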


Observe that here we correct $\varphi$ to include the absolute values.

\begin{proof}
Note that

\begin{align}
    \frac{\partial}{\partial x_j}\norm{\pmb{x}}_p &= \frac{\partial}{\partial x_j} \Big( \sum_{i=1}^n |x_i|^p \Big)^{1/p} \\
    &= \frac{1}{p} \Big( \sum_{i=1}^n |x_i|^p \Big)^{1/p - 1} \frac{\partial}{\partial x_j} |x_j|^p
\end{align}

Since $\frac{\partial}{\partial x_j} |x_j|^p = p|x_j|^{p-1} \sgn(x_j)$,

\begin{equation}
    \frac{\partial}{\partial x_j}\norm{\pmb{x}}_p = \norm{\pmb{x}}_p^{1-p} |x_j|^{p-1} \sgn(x_j)
\end{equation}

\noindent concluding the proof.
\end{proof}

\begin{lemma}\label{grad_f_T_lemma}
Let $f_T(\pmb{x}_1,\cdots,\pmb{x}_k)$ be the polynomial associated with the tensor $T$. If $\norm{\pmb{x}_i}=1$ for all $i$, then

\begin{equation}\label{grad_f_T_eq}
    \frac{\pmb{x}_i^\top \nabla_{\pmb{x}_i} f_T (\pmb{x}_1,\cdots,\pmb{x}_k) }{\pmb{x}_i^\top \nabla_{\pmb{x}_i} \norm{\pmb{x}_i}_p} = f_T(\pmb{x}_1,\cdots,\pmb{x}_k)
\end{equation}
\end{lemma}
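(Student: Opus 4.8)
The plan is to evaluate the numerator $\pmb{x}_i^\top \nabla_{\pmb{x}_i} f_T(\pmb{x}_1,\cdots,\pmb{x}_k)$ and the denominator $\pmb{x}_i^\top \nabla_{\pmb{x}_i}\norm{\pmb{x}_i}_p$ of \cref{grad_f_T_eq} separately, each via an Euler-type homogeneity identity, and then substitute the normalization $\norm{\pmb{x}_i}_p = 1$.

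First I would treat the numerator. Since $f_T$ is the $k$-linear function associated with $T$, with the arguments other than the $i$-th one held fixed it is a linear, hence degree-one homogeneous, function of $\pmb{x}_i$. Writing $f_T$ in coordinates as in \cref{associated_poly_eq}, each partial derivative $\partial f_T/\partial x_j^{(i)}$ is independent of $\pmb{x}_i$, so $f_T = \sum_j x_j^{(i)}\,\partial f_T/\partial x_j^{(i)} = \pmb{x}_i^\top \nabla_{\pmb{x}_i} f_T$; equivalently this is \cref{euler_homo_lemma} applied with homogeneity exponent $1$ in the variable $\pmb{x}_i$. Hence the numerator equals $f_T(\pmb{x}_1,\cdots,\pmb{x}_k)$, and notably this step uses nothing about the norms.

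Next I would compute the denominator. Because $\norm{\pmb{x}_i}_p = 1 \neq 0$, \cref{grad_norm_lemma} applies and gives $\nabla_{\pmb{x}_i}\norm{\pmb{x}_i}_p = \varphi_{p-1}(\pmb{x}_i)/\norm{\pmb{x}_i}_p^{\,p-1}$. Taking the inner product with $\pmb{x}_i$ and using the coordinatewise identity $x\,\sgn(x)\,|x|^{p-1} = |x|^p$, the pairing $\pmb{x}_i^\top \varphi_{p-1}(\pmb{x}_i)$ collapses to $\sum_j |x_j^{(i)}|^p = \norm{\pmb{x}_i}_p^{\,p}$, so the denominator equals $\norm{\pmb{x}_i}_p^{\,p}/\norm{\pmb{x}_i}_p^{\,p-1} = \norm{\pmb{x}_i}_p$ --- once again a degree-one homogeneity identity, now for the norm rather than for $f_T$.

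Finally, the hypothesis $\norm{\pmb{x}_i}_p = 1$ forces the denominator to be $1$, so the ratio in \cref{grad_f_T_eq} collapses to $f_T(\pmb{x}_1,\cdots,\pmb{x}_k)$, which is the claim. I do not expect a substantive obstacle; the only points deserving a word of care are that $\norm{\pmb{x}_i}_p = 1$ ensures $\pmb{x}_i \neq \pmb{0}$ so that \cref{grad_norm_lemma} may legitimately be invoked, and that for $1 < p < \infty$ the map $t \mapsto |t|^p$ is $C^1$ with derivative $p\,|t|^{p-1}\sgn t$ (vanishing consistently at $t=0$), which is what makes all the gradients above well defined.
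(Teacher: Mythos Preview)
Your argument is correct and follows essentially the same route as the paper: both separate the quotient into numerator and denominator, handle the denominator via \cref{grad_norm_lemma} together with the identity $\pmb{x}^\top \varphi_{p-1}(\pmb{x}) = \norm{\pmb{x}}_p^{\,p}$, and reduce the numerator to $f_T$ by linearity in the $i$-th slot. The only cosmetic difference is that you package the numerator step as an instance of \cref{euler_homo_lemma} with exponent $1$, whereas the paper writes out the coordinate expansion of $\nabla_{\pmb{x}_i} f_T$ explicitly; your formulation is a bit cleaner and also gets the denominator right as $\norm{\pmb{x}_i}_p$ rather than $\norm{\pmb{x}_i}_p^{\,p}$, though of course both equal $1$ under the hypothesis.
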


\begin{proof}
First, note that

\begin{align}
\pmb{x}^\top \varphi_{p-1}(\pmb{x}) &= \sum_{j} x_j \sgn(s_j) |x_j|^{p-1} \\
&= \sum_j |x_j| \sgn^2(x_j) |x_j|^{p-1} \\
&= \sum_j |x_j|^p
\end{align}

Hence, using \cref{grad_norm_lemma}, the denominator of the left side of \cref{grad_f_T_eq} is $\norm{\pmb{x}_i}_p^{p} = 1$. Recall the definition of $f_T$ in \cref{associated_poly_eq}. It follows that

\begin{align}
\nabla_{\pmb{x}_i} f_T(\pmb{x}_1,\cdots,\pmb{x}_k) &= f\Bigg(\pmb{x}_1,\cdots,\pmb{x}_{i-1},
\begin{bmatrix}
1 \\ \vdots \\ 1
\end{bmatrix}
,\pmb{x}_{i+1},\cdots,\pmb{x}_k \Bigg) \\
&= \begin{bmatrix}
\sum_{i_m, m\neq i} T_{i_1\cdots i_{i-1}, 1, i_{i+1} \cdots i_k} \prod_{r\neq i} x_{i_r}^{(r)} \\ \vdots \\
\sum_{i_m, m\neq i} T_{i_1\cdots i_{i-1}, n_i, i_{i+1} \cdots i_k} \prod_{r\neq i} x_{i_r}^{(r)}
\end{bmatrix}
\end{align}

Hence

\begin{align}
\pmb{x}_i^\top \nabla_{\pmb{x}_i} f_T (\pmb{x}_1,\cdots,\pmb{x}_k) &= \sum_{j} x_j^{(i)} \sum_{i_m, m\neq i} T_{i_1\cdots i_{i-1}, j, i_{i+1} \cdots i_k} \prod_{r\neq i} x_{i_r}^{(r)} \\
&= f_T (\pmb{x}_1,\cdots,\pmb{x}_k)
\end{align}

\noindent proving the lemma.

\end{proof}

\begin{lemma}[Euler's theorem for homogeneous functions (\cite{Tu11}, Problem 9.6)]\label{euler_homo_lemma}
Let $f \colon \mathbb{R}^n \setminus \{0\} \rightarrow \mathbb{R}$ be continuously differentiable and homogeneous of order $k$. Then

\begin{equation}
\pmb{x}^{\top} \nabla_{\pmb{x}} f(\pmb{x}) = k f(\pmb{x})
\end{equation}
\end{lemma}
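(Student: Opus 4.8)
The plan is to exploit the defining property of a homogeneous function of order $k$ directly, namely that $f(t\pmb{x}) = t^k f(\pmb{x})$ for every $t > 0$ and every $\pmb{x} \in \mathbb{R}^n \setminus \{0\}$, and then differentiate this identity in the scalar parameter $t$. Both sides are $C^1$ functions of $t$ on $(0,\infty)$: the right-hand side because $t \mapsto t^k$ is smooth, and the left-hand side because $t \mapsto t\pmb{x}$ is a smooth curve in $\mathbb{R}^n \setminus \{0\}$ (this is where I use $\pmb{x} \neq 0$, so that $t\pmb{x}$ stays in the domain of $f$) composed with the continuously differentiable map $f$.

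Fix $\pmb{x} \neq \pmb{0}$ and define $g(t) := f(t\pmb{x})$ for $t > 0$. First I would compute $g'(t)$ by the chain rule: writing $t\pmb{x} = (tx_1, \ldots, tx_n)$, we get $g'(t) = \sum_{i=1}^n x_i \,\partial_i f(t\pmb{x}) = \pmb{x}^\top \nabla f(t\pmb{x})$. Second, from $g(t) = t^k f(\pmb{x})$ I differentiate the right-hand side to obtain $g'(t) = k t^{k-1} f(\pmb{x})$. Equating the two expressions gives $\pmb{x}^\top \nabla f(t\pmb{x}) = k t^{k-1} f(\pmb{x})$ for all $t>0$. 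Finally, evaluating at $t = 1$ yields $\pmb{x}^\top \nabla f(\pmb{x}) = k f(\pmb{x})$, which is the claim.

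There is no real obstacle here; the only point requiring a modicum of care is the justification that the composite $t \mapsto f(t\pmb{x})$ is differentiable and that the chain rule applies, which is immediate from $f \in C^1$ and the domain being $\mathbb{R}^n \setminus \{0\}$ rather than all of $\mathbb{R}^n$ — the restriction $t > 0$ keeps $t\pmb{x}$ away from the origin. One could alternatively avoid even mentioning the curve and instead differentiate $f(t\pmb{x})$ under the interpretation $\frac{d}{dt}\big|_{t=1}$, recognizing $\pmb{x}$ as the velocity vector of the curve $t\mapsto t\pmb{x}$ at $t=1$, which is exactly the differential/curve formalism already set up earlier in the paper; but the elementary componentwise chain rule is the most transparent route for an appendix lemma.
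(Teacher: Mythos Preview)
Your argument is correct and is the standard proof of Euler's identity: differentiate the homogeneity relation $f(t\pmb{x})=t^k f(\pmb{x})$ in $t$ via the chain rule and evaluate at $t=1$. The paper itself does not supply a proof of this lemma---it simply states it with a citation to \cite{Tu11}, Problem~9.6---so there is nothing to compare against; your write-up would serve perfectly well as the missing proof.
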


\end{document}